\documentclass[a4paper,reqno,12pt]{article}

\usepackage{latexsym,amssymb, amsthm, epsfig}
\usepackage{helvet}         
\usepackage{courier}        
\usepackage{multicol}

\usepackage{color}
\usepackage{graphicx}
\usepackage{amsmath}
\usepackage{amsfonts}



\newtheorem{theorem}{\bf Theorem}[section]
\newtheorem{lemma}[theorem]{\bf Lemma}
\newtheorem{corollary}[theorem]{\bf Corollary}
\newtheorem{proposition}[theorem]{\bf Proposition}
\newtheorem{remark}{\bf Remark}[section]
\newtheorem{definition}{\bf Definition}[section]

\newcounter{for}[section]

\newcommand{\be}[1]{\addtocounter{for}{1}\begin{equation}\label{#1}}
\newcommand{\ee}{\end{equation}}

\def\R{{\mathbb R}}

\def\N{{\mathbb N}}

\def\cM{{\mathcal{M}}}
\def\cN{{\mathcal{N}}}

\def\ka6{6}

\def\ve{{\varepsilon}}

\def\({{\Bigl(}}
\def\){{\Bigr)}}
\def\reff#1{(\ref{#1})}
\def\one{{\mathbf 1}}
\def\ind{{\mathbf 1}}
\def\square{\ifmmode\sqr\else{$\sqr$}\fi}
\def\sqr{\vcenter{
         \hrule height.1mm
         \hbox{\vrule width.1mm height2.2mm\kern2.18mm\vrule width.1mm}
         \hrule height.1mm}}                  


\newcommand {\cro}[1] {\left[ {#1} \right]}
\newcommand {\acc}[1] {\left\{ {#1} \right\}}
\newcommand {\pare}[1] {\left( {#1} \right)}

\theoremstyle{plain}

\theoremstyle{definition}

\theoremstyle{remark}

\def\epr{\end{proof}}

\def\bpr{\begin{proof}}


\topmargin -0.5in
\textheight 23cm
\oddsidemargin -1mm       
\evensidemargin 0.25in 
\textwidth 16.5cm
\parskip=3pt plus 1pt minus 1pt


\def \beq {\begin{eqnarray}}
\def \eeq {\end{eqnarray}}
\def \beqn {\begin{eqnarray*}}
\def \eeqn {\end{eqnarray*}}





\newcommand{\bl}[1]{\begin{lemma}\label{#1}}
\newcommand{\br}[1]{\begin{remark}\label{#1}}
\newcommand{\brs}[1]{\begin{remarks}\label{#1}}
\newcommand{\bt}[1]{\begin{theorem}\label{#1}}
\newcommand{\bd}[1]{\begin{definition}\label{#1}}
\newcommand{\bp}[1]{\begin{proposition}\label{#1}}
\newcommand{\bc}[1]{\begin{corollary}\label{#1}}
\newcommand{\bfact}[1]{\begin{fact}\label{#1}}
\newcommand{\bex}[1]{\begin{example}\label{#1}}
\newcommand{\ec}{\end{corollary}}
\newcommand{\efact}{\end{fact}}
\newcommand{\eex}{\end{example}}
\newcommand{\el}{\end{lemma}}
\newcommand{\er}{\end{remark}}
\newcommand{\ers}{\end{remarks}}
\newcommand{\et}{\end{theorem}}
\newcommand{\ed}{\end{definition}}

\newcommand{\ep}{\end{proposition}}

\newcommand{\bcl}[1]{\begin{claim}\label{#1}}
\newcommand{\ecl}{\end{claim}}

\newcommand{\ecs}{\end{corollary}}
\newcommand{\eers}{\end{exercise}}
\newcommand{\eexs}{\end{example}}
\newcommand{\eems}{\end{example}}
\newcommand{\els}{\end{lemma}}
\newcommand{\eles}{\end{lemmaex}}
\newcommand{\ets}{\end{theorem}}
\newcommand{\eds}{\end{definition}}
\newcommand{\eps}{\end{proposition}}

\newcommand{\bi}{\begin{itemize}}
\newcommand{\ei}{\end{itemize}}
\newcommand{\ben}{\begin{enumerate}}
\newcommand{\een}{\end{enumerate}}



\def\sqr#1#2{{\vcenter{\vbox{\hrule height .#2pt
                             \hbox{\vrule width .#2pt height#1pt \kern#1pt
                                   \vrule width .#2pt}
                             \hrule height .#2pt}}}}
\def\square{\mathchoice\sqr54\sqr54\sqr{4.1}3\sqr{3.5}3}
\def\pmb#1{\setbox0=\hbox{#1}%
   \kern-.025em\copy0\kern-\wd0
   \kern.05em\copy0\kern-\wd0
   \kern-.025em\raise.0433em\box0 }
\def\sqr#1#2{{\vcenter{\vbox{\hrule height.#2pt
     \hbox{\vrule width.#2pt height#1pt \kern#1pt
   \vrule width.#2pt}\hrule height.#2pt}}}}

\def\R{{\mathbb R}}


\def\cG{\mathcal G}

\def\reff#1{(\ref{#1})}

\def\qsd{{\sc{qsd}}}
\def\fv{{Fleming-Viot}}
\def\gw{{Galton-Watson}}

\def\nuqs{{\nu_{\rm qs}^*}}
\def\rmost{{R}}
\def\lambdan{{\lambda^{\text{\hskip-.7mm\tiny\it{N}}}}}
\def\psil{{\psi^{\text{\hskip-.3mm\tiny\it{L}}}}}
\def\cLn{\mathcal L^{\text{\hskip-.4mm\tiny\it{N}}}}

\def\bF{\mathbf F}

\def\tQg{{\widetilde Q}}

\def\tZ{{\widetilde Z}}
\def\tSt{{\widetilde S_t}}

\begin{document}

\title{Fleming-Viot selects the minimal quasi-stationary distribution: 
The Galton-Watson case.}

\author{Amine Asselah, Pablo A. Ferrari, Pablo Groisman, Matthieu Jonckheere
\\ 
\sl Universit\'e Paris-Est, Universidad de Buenos Aires, IMAS-Conicet}
\date{}
%
%
%


%
%
%

\maketitle

\noindent {\bf Abstract}
  Consider $N$ particles moving independently, each one according to
  a subcritical continuous-time Galton-Watson process unless it hits
  $0$, at which time it jumps instantaneously to the position of one
  of the other particles chosen uniformly at random. The resulting
  dynamics is called Fleming-Viot process.  We show that for
  each $N$ there exists a unique invariant measure for the
  Fleming-Viot process, and that its stationary empirical distribution
  converges, as $N$ goes to infinity, to the minimal
  quasi-stationary distribution of the Galton-Watson process
  conditioned on non-extinction. 

\vskip 3mm

\noindent {{\it AMS 2000 subject classifications}.  Primary 60K35;
Secondary 60J25}

\noindent {\it Key words and phrases}.  Quasi-stationary distributions,
Fleming-Viot processes, branching processes, selection principle.

%


\section{Introduction}
The concept of {\it quasi-stationarity} arises in stochastic modeling of
population dynamics. In 1947, Yaglom \cite{yaglom} considers subcritical
Galton-Watson processes conditioned to survive long times.  He shows that as
time is sent to infinity, the conditioned process, started with one individual,
converges to a law, now called a {\it quasi-stationary distribution}.  For any
Markov process, and a subset $A$ of the state space, we denote by $\mu T_t$ the
law of the process at time $t$ conditioned on not having hit $A$ up to time $t$,
with initial distribution $\mu$. A probability measure on $A^c$ is called
\emph{quasi-stationary distribution} if it is a fixed point of $T_t$ for any
$t>0$.

In 1966, Seneta and Veres-Jones \cite{seneta-veres} realize that for subcritical
Galton-Watson processes, there is a one-parameter family of quasi-stationary
distributions and show that the Yaglom limit distribution has the minimal
expected time of extinction among all quasi-stationary distributions. This
unique {\it minimal} quasi-stationary distribution is denoted here $\nuqs$. They
also show that with an initial distribution $\mu$ with finite first moment, $\mu
T_t$ converges to $\nuqs$ as $t$ goes to infinity.

In 1978, Cavender \cite{cavender} shows that for Birth and Death chains on the
non negative integers absorbed at 0, the set of quasi-stationary measures is
either empty or is a one parameter family. In the latter case, Cavender extends
the {\it selection principle} of Seneta and Veres-Jones.  He also shows that the
limit of the sequence of quasi-stationary distributions for truncated processes
on $\{1,\dots,L\}$ converges to $\nuqs$ as $L$ is sent to infinity.  This
picture holds for a class of irreducible Markov processes on the non-negative
integers with 0 as absorbing state, as shown in 1996 by Ferrari, Kesten,
Martinez and Picco \cite{ferrari-kesten}.  The main idea in
\cite{ferrari-kesten} is to think of the conditioned process $\mu T_t$ as a mass
transport with refeeding from the absorbing state to each of the transient
states with a rate proportional to the transient state mass.  More precisely,
denoting $\N$ the set of positive integers, the Kolmogorov forward equation
satisfied by $\mu T_t(x)$, for each $x\in\N$, reads
 \be{kfe}
\frac{\partial}{\partial t} \mu T_t(x) = \sum_{y: y\ne x}\big(q(x,y) + q(x,0)
\mu T_t(y)\big)\,[\mu T_t(y)- \mu T_t(x)],  \ee
where $q(x,y)$ is the jump rate from $x$ to $y$. The first term in the right
hand side represents the displacement of mass due to the jumps of the process
and the second term represents the mass going from each $x$ to 0 and then coming
instantaneously to $y$.

In 1996, Burdzy, Holyst, Ingerman and March \cite{burdzy1} introduced a {\it
  genetic} particle system called \emph{Fleming-Viot} named after models
proposed in \cite{fv}, which can be seen as a particle system mimicking the
evolution \eqref{kfe}. The particle system can be built from a process with absorption
$Z_t$ called \emph{driving process}; the position $Z_t$ is interpreted as
a genetic trait, or fitness, of an individual at time $t$.  In the $N$-particle
Fleming-Viot system, each trait follows independent dynamics with the same law
as $Z_t$ except when one of them hits state 0, a lethal trait: at this moment
the individual adopts the trait of one of the other individuals chosen uniformly
at random.  Leaving aside the genetic interpretation, the empirical distribution
of the $N$ particles at positions $\xi\in\N^N$ is defined as a function
$m(\cdot,\xi): \N\to[0,1]$ by
\be{empiric} \forall x\in \N,\qquad m(x,\xi) :=
\frac1N\sum_{i=1}^N \ind_{\{\xi(i)=x\}}.  \ee 
The generator of the Fleming-Viot process with $N$ particles applied to bounded
functions $f:\N^N\to\R$ reads
\be{generatorfv}
\cLn f(\xi)= \sum_{i=1}^N \sum_{y=1}^\infty \Bigl[q(\xi(i),y)+ q(\xi(i),0)\,
\textstyle{\frac{N}{N-1}} \,m(y,\xi)\Bigr]\, [f(\xi^{i,y})-f(\xi)],
\ee
where $\xi^{i,y}(i)=y$, and for $j\not=i$, $\xi^{i,y}(j) =
\xi(j)$ and $q(x,y)$ are the jump rates of the driving process.  Assume that the
driving process has a unique quasi-stationary distribution, called $\nu_{\rm
  qs}$ and that the associated $N$-particle Fleming-Viot system has an invariant
measure $\lambdan$. The main conjecture in \cite{burdzy1,burdzy2} is that
assuming $\xi$ has distribution $\lambdan$, the law of the random measure
$m(.,\xi)$ converges to the law concentrated on the constant $\nu_{\rm qs}$.
This was proven for diffusion processes on a bounded domain of~$\R^d$, killed at
the boundary~\cite{bieniek, GK1, GK,villemonais}, for jump
processes under a Doeblin condition~\cite{ferrari-maric} and for finite state
jump processes~\cite{AFG}.

The subcritical \gw{} process has
infinitely many quasi-stationary distributions.  Our theorem proves that
the stationary empirical distribution $m(\cdot,\xi)$ converges to $\nu_{\rm qs}^*$,
the minimal quasi-stationary distribution.  This phenomenon is a \emph{selection
  principle}.

\begin{theorem}\label{theo-main}
  Consider a subcritical Galton-Watson process whose offspring law has some
  finite positive exponential moment.  Let $\nuqs$ be the minimal 
quasi-stationary distribution for the process conditioned on non-extinction.
Then, for each $N\ge 1$, the associated  $N$-particle Fleming-Viot system
is ergodic.
Furthermore, if we call its invariant measure $\lambdan$, then
\be{main-1}
\forall x\in \N,\qquad
\lim_{N\to\infty}\int |m(x,\xi) - \nuqs(x)| \, d\lambdan(\xi) \;=\;0.
\ee
\end{theorem}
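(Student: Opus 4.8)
The plan is to separate the statement into an ergodicity part and a convergence part, and to route the entire convergence through a single a priori estimate — a uniform-in-$N$ exponential moment for the stationary empirical measure — from which tightness, the identification of limit points as quasi-stationary distributions, and the selection of the minimal one will all follow.

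For ergodicity at fixed $N$, I would first note that the chain is irreducible on $\N^N$: the subcritical branching mechanism lets a single coordinate both increase (births) and decrease down to $1$ (deaths) without absorption, and the refeeding reinjects an absorbed coordinate at an occupied site, so every configuration communicates with every other. Positive recurrence I would get from a Foster--Lyapunov inequality for $W(\xi)=\sum_{i=1}^N e^{\theta\xi(i)}$, with $\theta>0$ small enough that $\psi(\theta):=\sum_k r_k(e^{\theta k}-1)$ is finite — this is exactly where the exponential-moment hypothesis is used — and negative (subcriticality forces $\psi'(0)<0$). A direct computation with \reff{generatorfv}, using that for one Galton--Watson coordinate the generator of $e^{\theta\,\cdot}$ at $x$ equals $x\,e^{\theta x}\psi(\theta)$, shows that the branching part contributes $\psi(\theta)\sum_i\xi(i)e^{\theta\xi(i)}$ up to a bounded $O(N)$ correction from excluding the jump to $0$ (recall $q(x,0)=0$ for $x\ge2$ and $q(1,0)=:d$), while the refeeding part is bounded by $\tfrac{N}{N-1}\,d\,W$. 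Splitting the first sum at a large threshold $K$ and using $\xi(i)\ge1$ yields, with constants independent of $N$, a drift bound $\cLn W(\xi)\le -c_0 W(\xi)+C\,N$. With irreducibility this gives a unique invariant measure $\lambdan$ and ergodicity.

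Inserting stationarity $\int\cLn W\,d\lambdan=0$ into the drift bound produces the crucial estimate $\int\langle m(\cdot,\xi),e^{\theta\,\cdot}\rangle\,d\lambdan=\tfrac1N\int W\,d\lambdan\le C/c_0$, uniformly in $N$. This controls the tails of the averaged empirical measure uniformly, so the laws of $m(\cdot,\xi)$ under $\lambdan$ are tight on $\mathcal P(\N)$, no mass escapes to infinity along subsequences, and by lower semicontinuity every limit point carries a finite exponential moment, hence a finite first moment. Next I would show that any limit point is a QSD: applying $\cLn$ to linear statistics $f_\phi(\xi)=\sum_i\phi(\xi(i))=N\langle m(\cdot,\xi),\phi\rangle$ and to $f_\phi^2$, and using stationarity, gives that the mean empirical measure solves the stationary form of \reff{kfe} up to correlation terms, together with a variance bound $\mathrm{Var}_{\lambdan}(\langle m,\phi\rangle)=O(1/N)$; the uniform exponential moment is what renders the unbounded rates $q(x,y)$ integrable against $m$ and controls these errors. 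Thus the empirical measure concentrates on a deterministic limit $\rho$ which is a fixed point of $T_t$, i.e.\ a quasi-stationary distribution.

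The selection step is then short. Since $\rho$ is $T_t$-invariant and, by the a priori estimate, has finite first moment, the Seneta--Vere-Jones convergence quoted above ($\mu T_t\to\nuqs$ for every $\mu$ of finite first moment) forces $\rho=\lim_t\rho T_t=\nuqs$; equivalently, $\nuqs$ is the unique finite-first-moment QSD, the non-minimal members of the one-parameter family necessarily having infinite first moment. Combining the resulting mean convergence $\int m(x,\xi)\,d\lambdan\to\nuqs(x)$ with the $O(1/N)$ variance bound yields the $L^1(\lambdan)$ statement \reff{main-1}. I expect the concentration/propagation-of-chaos step to be the main obstacle: on the non-compact space $\N$ the branching rates are unbounded, so closing the equation for the empirical measure and showing the correlation and boundary ($y=0$) terms are negligible requires carefully propagating the uniform exponential moment through the generator, in place of the soft compactness arguments available in the finite-state or Doeblin settings. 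By contrast the selection itself is immediate once the limit is known to be a finite-mean QSD.
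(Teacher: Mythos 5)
Your first step (ergodicity and the uniform a priori bound) is essentially sound, and in fact is a more elementary route than the paper's: the drift computation for $W(\xi)=\sum_i e^{\theta\xi(i)}$ does close (the refeeding part is at most $p(0)\frac{\eta(1)}{N-1}W\le 2p(0)W$, and the branching part supplies $-\frac{\theta v}{2}\sum_i\xi(i)e^{\theta\xi(i)}$, which beats it above a fixed threshold), giving $\cLn W\le -c_0W+CN$ and hence, after the truncation argument needed to justify $\int \cLn W\,d\lambdan=0$ for an unbounded function with unbounded rates (the paper does exactly this kind of approximation in Section~\ref{moments-fv}), the bound $\int\langle m,e^{\theta\cdot}\rangle\,d\lambdan\le C$. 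This single estimate implies Corollary~\ref{cor-bary}, whereas the paper reaches it through the coupling with a multitype branching process, large deviations, and the Foster inequality of Proposition~\ref{prop-rightmost}. Your final selection step is also fine as stated: a QSD with finite first moment must equal $\nuqs$ by the Seneta--Vere-Jones convergence \cite{seneta-veres}.

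The genuine gap is the middle step: you assert that applying $\cLn$ to $f_\phi^2$ and using stationarity yields $\mathrm{Var}_{\lambdan}(\langle m,\phi\rangle)=O(1/N)$, and this is precisely what does \emph{not} follow. Stationarity applied to $f_\phi^2$ gives only the identity $\int \Gamma_{\cLn}(f_\phi)\,d\lambdan=-2\int f_\phi\,\cLn f_\phi\,d\lambdan$, where $\Gamma_{\cLn}$ is the carr\'e du champ; this controls the Dirichlet form of $f_\phi$, and converting a Dirichlet-form bound into a variance bound requires a Poincar\'e inequality for $\cLn$ with constant uniform in $N$, which is not available and is not supplied by any of your estimates. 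Indeed, chaos for the \emph{stationary} empirical measure with rate $1/N$ is explicitly listed in the paper as an open problem. The correlation bounds that are actually provable (Proposition~\ref{prop-chaos}, from \cite{AFG}) hold at a fixed time $t$ with constant $e^{2p(0)t}$, which diverges as $t\to\infty$, so they cannot be passed to $\lambdan$ by letting $t\to\infty$. Without the variance bound, your identification of limit points as fixed points of $T_t$ collapses, because the nonlinear term $\int m(1)\langle m,\phi\rangle\,d\lambdan$ in the stationary equation cannot be factorized. The paper's way around this is the decomposition \eqref{strategy-1}: start from $\lambdan$, run the dynamics a \emph{finite} time $t$, compare the FV empirical measure to the conditioned evolution $m(\cdot,\xi)T_t$ via the finite-time bounds of Proposition~\ref{prop-semigroups}, and then invoke Proposition~\ref{prop-yaglom} -- the Yaglom limit holds \emph{uniformly} over the set $K(\alpha)$ of measures with bounded second-to-first-moment ratio, on which $\lambdan$ concentrates by Corollary~\ref{cor-bary} -- so that $t\to\infty$ can be taken before $N\to\infty$. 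Your proposal has no counterpart to this uniform Yaglom limit, and replacing it by a direct stationary concentration argument amounts to solving the stated open problem, not to a routine propagation of the exponential moment through the generator.
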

A simple
consequence is propagation of chaos. For any finite set $S \subset \N$,
\be{limit-chaos} \lim_{N \to \infty} \int\prod_{x \in S} m(x,\xi) \
d\lambdan(\xi)= \prod_{x \in S} \nuqs(x).  \ee

The strategy for proving Theorem~\ref{theo-main} is explained in the next
section, but there are two key steps in the proof. First, we control the
position of the rightmost particle. Let
\[
\rmost(\xi)\;:=\; \max_{i\in\{1,\dots,N\}} \xi(i),
\]
be the position of the rightmost particle of $\xi$. Let $\xi^\xi_t$ the
positions at time $t$ of the $N$ Fleming-Viot particles, initially on $\xi$.

\bp{prop-rightmost} There is a time $T$ and positive
constants $A, c_1, c_2,C$ and
$\rho$, independent of $N$, such that for any
$\xi\in\N^N$ 
\be{foster}
 E\big(\exp\big(\rho\rmost(\xi^\xi_T)\big)\big) -\exp\big(\rho
\rmost(\xi)\big)\;<\;\;
-\,c_1\, e^{\rho\rmost(\xi)}\ind_{\rmost(\xi)>A}  + N c_2
e^{-C\rmost(\xi)}.
\ee
As a consequence, for each $N$ there is a unique invariant 
measure $\lambdan$ for the $N$-particle Fleming-Viot system.
Furthermore, there is a constant $\kappa>0$ such that for any $N$,
\be{rightmost-expon} 
\int \exp(\rho \rmost(\xi)) d\lambdan(\xi) \;\le\; \kappa N.
\ee
\ep 
The second result is that the ratio between the second and the
first moment of the empirical distribution plays the role of a
Lyapunov functional, given that the position of the 
rightmost particle is not
too large. For a particle configuration $\xi$ define
 \be{def-psi2} \psi(\xi):= \frac{\sum_{1\le i\le N}\xi^2(i)}
{\sum_{1\le i\le N} \xi(i)}.  \ee
Recall $\cLn$ is the Fleming-Viot generator given by \eqref{generatorfv}.

\bp{prop-key}  There are positive constants $v,C_1$ and $C_2$ independent of
$N$ such that
 \be{L-psi2} \cLn\psi(\xi)\;\le\; -v\psi(\xi)+C_1
\frac{\rmost^2(\xi)}{N}+C_2.  \ee \ep 
Propositions \ref{prop-rightmost} and \ref{prop-key}
imply that the expectation of $\psi$ 
under the invariant measure $\lambdan$ is
uniformly bounded in $N$.
\bc{cor-bary} There is a positive
constant $C$ such that for all $N$,
\be{ineq-bary} \int \psi(\xi)\, d\lambdan(\xi)
\;\le\; C.  \ee
\ec

There are several related works motivated by genetics. Brunet,
Derrida, Mueller and Munier \cite{BDMM1,BDMM2} 
introduce a model of evolution of a population with selection.
They study the genealogy of genetic traits, 
the empirical measure, and link the evolution of the barycenter
with F-KPP equation $\partial_t u=\partial_x^2 u-u(1-u)$ 
introduced in 1937 by R.A. Fisher to describe the evolution
of an advantageous gene in a population. These authors also
discover an exactly soluble model whose genealogy is
identical to those predicted by Parisi's theory of mean-field
spin glasses.
Durrett and Remenik \cite{durret} establish propagation of chaos for
a related continuous-space and time model, and then show
that the limit of the empirical
measure is characterized as the solution of a free-boundary
integro-differential equation. B\'erard and Gou\'er\'e
\cite{berard-gouere} establish a conjecture
of Brunet and Derrida for the speed of the rightmost particle
for still a third microscopic model of F-KPP equation
introduced in \cite{brunet-derrida1,brunet-derrida2}.  
Maillard \cite{maillard} obtains the precise behavior of
the empirical measure of an approximation of the same model, building
on the results of Berestycki, Berestycki and Schweinsberg
\cite{berestycki}, which establish the genealogy picture described in 
{\cite{brunet-derrida1,brunet-derrida2}}.

We now mention two open problems. The first
is to solve the analogous to Theorem~\ref{theo-main} for a random
walk with a constant drift toward the origin. The second is
to obtain propagation of chaos directly on the
stationary empirical measure, with a bound of order $1/N$.

In the next section, we describe our model, sketch the proof of our main result
and describe the organization of the paper.

\section{Notation and Strategy}\label{section2}
Let $\sigma>0$ and $p$ be a probability distribution on $\N\cup\{0\}$
such that 
\be{a15}
  \sum_{\ell\ge 0} p(\ell)\,e^{\sigma\ell}\;<\;\infty.
\ee
Consider a Galton-Watson process $Z_t\in\N\cup\{0\}$ with offspring law
$p$. Each individual lives an exponential time of parameter 1, and then gives
birth to a random number of children with law~$p$.
We assume that \gw{} is subcritical, that is we ask $p$ to satisfy
\be{def-drift} -v\;:=\;\sum_{\ell \ge -1} \ell p(\ell+1) \;<\;0.  \ee 
In other words, the drift when $Z_t=x$ is $-vx<0$.  For distinct $x,y\in\N\cup\{0\}$, the
rates of jump are given by 
\be{rates} q(x,y) := 
\left\{\begin{array}{ll}
    x p(0), &\text{if } y= x-1\ge 0,\\
    x p(y-x+1), &\text{if } y> x\ge1,\\
    0,&\text{otherwise}.
\end{array} \right.
\ee
The \gw{} process starting at $x$ is denoted $Z^x_t$. For a distribution $\mu$
on $\N$, the law of the process starting with $\mu$ conditioned on
non-absorption until time $t$ is given by
\be{muTt}
\mu T_t(y) := \frac{\sum_{x\in \N} \mu(x) p_t(x,y)}{\sum_{x,z\in \N} \mu(x) p_t(x,z)},
\ee
where $p_t(x,y)= P(Z^x_t=y)$.

Recall that $\xi^\xi_t$ denotes the \fv{} system with generator
\eqref{generatorfv} and initial state $\xi$; $\xi_t(i)$
denotes the position of the $i$-th particle at time $t$. 
For a real $\alpha>0$ define $K(\alpha)$ as the subset of
distributions on $\N$ given by
\be{def-K}
K(\alpha):=\acc{\mu:\ \frac{\sum_{x\in\N} x^2\mu(x)}{
\sum_{x\in\N} x\mu(x)}\le\alpha}.
\ee
Observe that $\mu \in K(\alpha)$ implies $\sum x\mu(x) \le \alpha$.
\begin{proof}[Proof of Theorem \ref{theo-main}]
The existence of the unique invariant measure $\lambdan$ for \fv{} 
is given in Proposition~\ref{prop-rightmost}.

To show \eqref{main-1} we use the invariance of $\lambdan$
and perform the following decomposition. 
\be{strategy-1}
\begin{split}
\int &\big| m(x, \xi) - \nuqs(x)\big| \, d\lambdan(\xi) 
\;= \;\int E\big|  m(x,\xi^\xi_t) -   \nuqs(x)\big| \, d\lambdan(\xi)\\
&\le\; \lambdan({\psi>\alpha})+ 
\int_{\psi \le \alpha}\!\! E\big|m(x,\xi^\xi_t) -   \nuqs(x)\big|
\, d\lambdan(\xi)\\
&\le\; \lambdan({\psi>\alpha})+ \int_{\psi\le\alpha} \!\!\!
E\big|m(x,\xi^\xi_t) -  m(\cdot, \xi)T_t(x)\big|\,d\lambdan(\xi) + 
\int_{\psi\le\alpha} \!\!\!\big|m(\cdot, \xi)T_t(x) - \nuqs(x)\big|
\, d\lambdan(\xi)\\
 &\le\;  \lambdan({\psi>\alpha})+
\sup_{\xi:\psi(\xi)\le\alpha}\!\! \big| m(\cdot,\xi)T_t (x) - \nuqs(x) \big|\,+\,
\sup_{\xi:\psi(\xi)\le\alpha}\!\! E\big|m(x,\xi^\xi_t)-m(\cdot,\xi)T_t (x)\big|,
\end{split}
\ee
where $\psi$ is defined in \eqref{def-psi2}.
We bound the three terms of the last line of \eqref{strategy-1}.

\noindent\emph{First term. } 
Corollary \ref{cor-bary} and Markov inequality imply that there is a constant
$C>0$ such that for 
any $\alpha>0$  
\be{I3-estimate} \lambdan(\psi>\alpha) \le
\frac{C}{\alpha}.  \ee
\emph{Second term. }  Note that $\psi(\xi)<\alpha$ if and only if
$m(\cdot,\xi)\in K(\alpha)$.  The Yaglom limit converges to the minimal
quasi-stationary distribution
$\nuqs$, uniformly in $K(\alpha)$ as we show later in Proposition
\ref{prop-yaglom}:
\be{unif.Yaglom}
\lim_{t \to \infty} \sup_{\mu\in K(\alpha)} | \mu T_t(x) - \nuqs(x) |=0.
\ee

\noindent\emph{Third term. } We perform the decomposition
\be{ad1}
E\big| m(x,\xi^\xi_t)- m(\cdot,\xi)T_t(x)\big|\;\le\;
E\big| m(x,\xi^\xi_t) -E
m(x,\xi^\xi_t)\big|\,+\, 
\big| Em(x,\xi^\xi_t)- m(\cdot,\xi)T_t(x)\big|,
\ee 
and show that there exist positive constants $C_1$ and $C_2$ 
such that
\be{var1} 
\sup_{\xi\in\N^N} E\big| m(x,\xi^\xi_t) -E m(x,\xi^\xi_t)\big|\;\le\;
\frac{C_1 e^{C_2 t}}{\sqrt N} 
\ee
and 
\be{var2}
\sup_{\xi\in\N^N} \big| Em(x,\xi^\xi_t)- m(\cdot,\xi)T_t(x)\big|\;\le\;
\frac{ C_1 e^{C_2 t}}{N}, 
\ee
for all $N$, see Proposition \ref{prop-semigroups} later. The issue here is a
uniform bound for the correlations of the empirical distribution of \fv{} at
sites $x, y \in \N$ at fixed time $t$.  This was carried out in \cite{AFG}.

To show \eqref{main-1}, it suffices to bound the three terms in the bottom line
of \eqref{strategy-1}. Choose $\alpha$ large and use \eqref{I3-estimate} to make
the first term small (uniform in $N$). Use \eqref{unif.Yaglom} to choose $t$
large to make the second term small. For this fixed time, take $N$ large and use
\eqref{ad1}, \eqref{var1} and \eqref{var2} to make the third term small.
\end{proof}

The rest of the paper is organized as follows.  In Section
\ref{sec-construction}, we perform the graphical construction of \fv{} jointly
with a Multitype Branching Markov Chain. In Section~\ref{Galton-Watson
  estimates} we obtain large deviation estimates for the Galton-Watson
process. In Section~\ref{rightmost-particle} we obtain large deviation estimates
for the rightmost particle of the \fv{} system.  In Section~\ref{moments-fv} we
study the Lyapunov-like functional and prove Proposition~\ref{prop-key} and
Corollary~\ref{cor-bary}.  Convergence of the conditional evolution uniformly on
$K(\alpha)$ is proved in Section~\ref{sec-yaglom}. Finally,
\eqref{var1}-\eqref{var2} are handled in 
Proposition~\ref{prop-semigroups} of Section~\ref{sec-semigroups}.

\section{Embedding \fv{} on a multitype branching
Markov process.}\label{sec-construction} 

In this section we construct a coupling between the Fleming-Viot system and an
auxiliary multitype branching Markov process (hereafter, the branching
process). We call \emph{particles} the \fv{} positions and \emph{individuals}
the branching positions. Each individual has a a type in $\{1,\dots,N\}$ and a
position in $\N$.

When a particle chooses the position of another
particle and jumps to it, the process builds correlations making difficult to control the
position of the rightmost particle. In our coupling when a particle jumps,
either an individual jumps at the same time or a branching occurs at the site
where the particle arrives. In this way the particles always stay at sites
occupied by individuals and the maximum particle position is dominated by the
position of the rightmost individual (if this is so at time zero). This, in turn,
is dominated by the sum of the individual positions which we control.

The coupling relies on the Harris construction of Markov processes: the state of
the process at time $t$ is defined as a function of the initial configuration
and a family of independent Poisson processes in the time interval $[0,t]$.  The
coupling holds when the driving process is a Markov process with rates
$\{q(x,y),x,y\in \N\cup\{0\}\}$ with $0$ being the absorbing state and $\bar
q:=\sup_x q(x,0)<\infty$.

There are two types of jumps of the Fleming-Viot particle $i$. Those due to the
spatial evolution at rate $\tilde q$ and those due to ``jumps to zero and then
to the position of particle $j$ chosen uniformly at random'' at rate
$q(x,0)/(N-1)$.
 
\emph{Spatial evolution. } Each individual has a position in $\N$ which evolves
independently with transition rates $(\tilde q(x,y), x,y \in \N)$ defined by
$\tilde q(x,y):=q(x,y)\ind_{\{y \ne 0 \}}$ so that there are no jumps to zero.
The spatial evolution of new individuals born at branching times are independent
and with the same rates $\tilde q$. Under our coupling, each spatial jump
performed by the
$i$-particle is also performed by some $i$-individual.

\emph{The refeeding and branching. }  At rate $\bar q/(N-1)$, each
$j$-individual branches into two new individuals, one of type $j$ and one of
type $i$; each new born $i$-individual takes the position of the corresponding
$j$-individual and then evolves independently with rates $\tilde q$. If the
$i$-particle is at $x$, at rate $q(x,0)/(N-1)$ it jumps to the position of the
$j$-particle. Under our coupling, each time particle $i$ chooses particle $j$,
each $j$-individual branches into an $i$ and a $j$-individual.  In this way, the
$i$-particle occupies always the site of some $i$-individual.

The  branching process has state space
\[
{\mathcal B}:= \Bigl\{\zeta\in \N^{\{1,\dots,N\}\times\N}: \sum_{i=1}^N\sum_{x\in\N}
\zeta(i,x)<\infty\Bigr\}
\]
For 
 $i\in\{1,\dots,N\}$, $x\in \N$, $\zeta_t(i,x)$ indicates the number of
 individuals of type $i$ at site $x$ at time $t$.
 Let $\delta_{(i,x)}\in{\mathcal B}$ be the delta function on $(i,x)$ defined by
 $\delta_{(i,x)}(i,x)=1$ and $\delta_{(i,x)}(j,y)=0$ for $(j,y)\neq(i,x)$. The
 rates corresponding to the (independent) spatial evolution of the individuals
 at $x$ are
\[
b(\zeta,\zeta + \delta_{(i,y)}-\delta_{(i,x)}) = \zeta(i,x)q(x,y),\qquad
i\in\{1,\dots,N\},\; x,y\in\N, 
\]
and those corresponding to the branching of all $j$-individuals into an
individual of type $j$ and an individual of type $i$ are 
\[
b\Bigl(\zeta,\zeta+ \sum_{x\in\N} \zeta(j,x)\delta_{(i,x)}\Bigr) = \frac{\bar q}{N-1}
,\qquad i\ne j\in\{1,\dots,N\}
\]
Note that the new born
$i$-individuals get the spatial position of the corresponding $j$-individual.

\noindent{\bf Harris construction of the branching process } 
Let $(\cN(i,x,y,k), \,i\in\{1,\dots,N\}, x,y\in \N, k\in \N)$ be a family of
Poisson processes with rates $k\tilde q(x,y)$ such that
$\cN(i,x,y,k)\subset\cN(i,x,y,k+1)$ for all $k$; we think a Poisson process as a
random subset of $\R$. The process $\cN(i,x,y,k)$ is used to produce a jump of
an $i$-individual from $x$ to $y$ when there are $k$ $i$-individuals at site
$x$. The families $(\cN(i,x,y,k),k\ge 1)$ are taken independent. Let
$(\cN(i,j)$, $i\ne j)$, be a family of independent Poisson processes of rate
$\bar q/(N-1)$, these processes are used to branch all $j$-individuals
into an $i$-individual and a $j$-individual. The two families are taken
independent.

Fix $\zeta_0=\zeta\in\mathcal B$, assume the process is defined until time $s\ge
0$ and proceed by recurrence. 
\begin{enumerate}
\item Define $\tau(\zeta_s,s) := \inf\{t>s\,:\,t\in
\cup_{i,x,y}\cN(i,x,y,\zeta_s(i,x))\cup\cup_{i,j}\cN(i,j)\}$.
\item For $t\in [s,\tau)$ define $\zeta_t=\zeta_s$.
\item If $\tau\in \cN\big(i,x,y,\zeta_s(i,x)\big)$ then set $\zeta_\tau = \zeta_s+
\delta_{(i,y)}-\delta_{(i,x)}$.
\item  If $\tau\in \cN(i,j)$ then set $\zeta_\tau= \zeta_s+ \sum_{x\in\N}
\zeta_s(j,x)\delta_{(i,x)} $.
\end{enumerate}
 
The process is then defined until time $\tau$. Put $s=\tau$ and iterate to
define $\zeta_t$ for all $t\ge 0$. Denote $\zeta^\zeta_t$ the process with
initial state $\zeta$. We leave the reader to prove that $\zeta^\zeta_t$ so
defined is the branching process, that is, a Markov process with rates $b$ and
initial state $\zeta$.

Let $|\zeta|:= \sum_{i,x} \zeta(i,x)$ be the total number of individuals
in~$\zeta$.  Let 
\[
R(\zeta) :=\max\Bigl\{x:\sum_i\zeta(x,i)>0\Bigr\}.
\]
 Let $\tZ^z_t$ be the
process on $\N$ with rates $\tilde q$ and initial position $z\in\N$.

\bl{growth.mt} $ E|\zeta^\zeta_t |\,=\,|\zeta|\,e^{\bar q t}$.  \el
\bpr $ E|\zeta_t |$ satisfies the equation
\be{a524}
\frac{d}{dt} E|\zeta_t | \;=\; \frac{\bar q}{N-1} E\Bigl(\sum_i\sum_{j:j\neq i}\sum_x
\zeta_t(j,x)\Bigr) \; = \;
\frac{\bar q}{N-1}\, (N-1)\, E|\zeta_t| \;=\; \bar q E|\zeta_t|,
\ee
with initial condition $ E|\zeta_0 |=|\zeta|$.
\epr

\bl{a501} Let $g:\N\to\R^+$ be non decreasing. Then
\be{a520}
Eg(R(\zeta^\zeta_t))\;\le\;
E|\zeta^\zeta_t|\,Eg(\tZ
^{R(\zeta)}_t).
\ee
\el
\bpr
Consider the following
partial order on $\mathcal B$:
\be{a521}
\zeta\prec \zeta' \quad \text{ if and only if } \quad \sum_{y\ge x}\zeta(i,y)\le
\sum_{y\ge x}\zeta'(i,y), \quad \text{for all }i, x. 
\ee
The branching process is attractive: the Harris
construction with initial configurations $\zeta\prec \zeta'$ gives
$\zeta^\zeta_t\prec \zeta^{\zeta'}_t$ almost surely; we leave the proof
to the reader. Let $\zeta' := \sum_{i,x}\zeta(i,x)\delta_{(i,R(\zeta))}$ be the 
 configuration having the same number of individuals of type $i$ as $\zeta$ for
 all $i$, but all are located at $r:=R(\zeta)$.  Hence $\zeta\prec\zeta'$ and 
\be{a527}
Eg(R(\zeta^\zeta_t))\; \le\; \sum_{i,x} g(x) E\zeta^\zeta_t(i,x)\;\le\; \sum_x g(x)\sum_i
E\zeta^{\zeta'}_t(i,x),
\ee
because $g$ is non-decreasing. Fix $i$ and $x$ and define
\[
b_t(r,x) := \sum_i E\zeta^{\zeta'}_t(i,x),\quad a_t:=E|\zeta^{\zeta}_t|,\quad
\tilde p_t(r,x):=P(\tZ^{r}_t=x). 
\]
Since $b_t(r,x)$ and $a_t\tilde p_t(r,x)$ satisfy the same Kolmogorov
backwards equations and have the same initial condition, the
right hand side of \eqref{a527} is the same as the right hand side of
\eqref{a520}. 
This can be seen as an application of the one-to-many lemma, see \cite{athreya}.
\epr


\noindent{\bf Harris construction of Fleming-Viot} 
Let $\cN(i,j,x)\subset \cN(i,j)$ be the Poisson process obtained by
independently including each $\tau \in\cN(i,j)$ into $\cN(i,j,x)$ with
probability $q(x,0)/ \bar q$ ($\le1$, by definition of $\bar q$). The processes
$(\cN(i,j,x), i, j\in\{1,\dots,N\}, x\in\N)$ are independent Poisson processes
of rate $q(x,0)/(N-1)$.

Fix $\xi_0=\xi\in\N^{\{1,\dots,N\}}$, assume the process is defined until time
$s\ge 0$ and proceed iteratively from $s=0$ as follows.
\begin{enumerate}
\item Define $\tau(\xi_s,s) = \inf\{t>s\,:\,t\in
\cup_{i,y}\cN(i,\xi_s(i),y,1)\cup\cup_{i,j}\cN(i,j,\xi_s(i))\}$
\item For $t\in [s,\tau)$ define $\xi_t=\xi_s$.
\item  If $\tau\in \cN(i,\xi_s(i),y,1)$, then set $\xi_\tau(i) = y$ and for
$i'\neq i$ set $\xi_\tau(i') = \xi_s(i')$.
\item If $\tau\in \cN(i,j,\xi_s(i))$, then set $\xi_\tau(i)= \xi_s(j)$ and for
$i'\neq i$ set $\xi_\tau(i') = \xi_s(i')$.
\end{enumerate}
The process is then defined until time $\tau$. Put $s=\tau$ and iterate to
define $\xi_t$ for all $t\ge 0$. We leave the reader to prove that
$\xi^\xi_t$ is a Markov process with generator $\cLn$ and initial configuration
$\xi$ and the following lemma.

\bl{a506}
The Fleming-Viot $i$-particle coincides with the position of a branching
$i$-individual at time $t$ if this happens at time zero for all $i$. More
precisely,
\be{a511}
  \zeta_0(i,\xi_0(i))\ge 1\text{ for all }i\text{ implies }\zeta_t(i,\xi_t(i))\ge
1\text{ for all }i,\quad \text{a.s.}.
\ee
\el
\bc{a508} Assume $\zeta_0(i,\xi_0(i))\ge 1$ for
all $i$. Then,
\be{a510}
R(\xi_t) \le R(\zeta_t),\quad\text{a.s.}
\ee
\ec

\section{Galton-Watson estimates}\label{Galton-Watson estimates}

We show now that for $\rho$ small enough the functions $e^{\rho\cdot}$ belong to
the domain of the generator of \gw, that is, the Kolmogorov equations hold for
these functions. The total number of births of the \gw{} process $Z^x_t$ is a
random variable $H^x:=x+\sum_{t>0} (Z^x_t-Z^x_{t-})^+$. Theorem 2 in \cite{NSS}
says that \eqref{a15}-\eqref{def-drift} are equivalent to the existence of a
$\sigma'>0$ such that
\be{hh} 
E(\exp(\sigma' H^1)) \;<\;\infty. 
\ee
Clearly $\sigma'\le \sigma$. Let
%
\be{boldF} 
\bF\;:=\; \Big\{f:\N\cup\{0\}\to\R\,:\, \sum_{\ell\ge
  0}e^{-\rho\ell}|f(\ell)|\,<\,\infty \mbox{ for some } \rho<\sigma'\}.  
\ee
Note that if $f\in\bF$, then there exist $\rho<\sigma'$ and $C>0$  such
that $|f(\ell)|\,\le\, C e^{\rho\ell},\;\ell\ge 0$.  For $f\in\bF$ define the
\gw{} semigroup by
\be{semiproup-gw}
S_tf(x) :=
E(f(Z^x_t))\;<\;\infty,
\ee
because $Z^x_t \le H^x$ for all $t\ge 0$. 
The generator
$Q$ of \gw{} applied on functions $f$ is given by
\be{gen-branching} Qf(x):= \sum_{\ell=-1}^{\infty} xp(\ell+1)
\big( f(x+\ell)-f(x)\big),\qquad x\ge 0,
\ee
if the right hand side is well defined.
\bl{gwke} Under the assumption \eqref{a15}, for $f\in \bF$, $Qf(x)$ is well
defined and the Kolmogorov equations hold:
\be{kegw} \frac{d}{dt}S_t f = QS_tf =
S_tQf.  
\ee
\el
\begin{proof}
Since $|f(x)|\le C\exp(\rho x)$ for all $x \in \N$,
  \be{b11} |Qf(x)| \;\le\; C xe^{\rho x} \Bigl(\sum_{\ell\ge -1}
  p(\ell+1)\,e^{\rho\ell} + 1)\Bigr).  \ee
This shows the first part of the lemma.
Consider $f\in\bF$ and define the local
martingale (see \cite[Section IV-20, pp. 30-37]{RW} )
\[
M^x_t\;:=\; f(Z^x_t) - f(x)- \int_0^t Q f(Z^x_s) ds.
\]
Using \eqref{b11}, for all $s \le t$
\[
|M^1_s|\; \le\; e^\rho+ \exp(\rho H^1) + tC  H^1\exp(\rho H^1) 
\;\le\;  \tilde C \exp(\tilde \rho H^1),
\]
with $\rho < \tilde \rho < \sigma'$.  Hence $ E \sup_{s \in [0,t]} |M^1_s| <
\infty $ and $M^1_t$ is a martingale by dominated convergence.  Since for
$\rho\le \sigma'$, $ E\exp( \rho H^x)=(E \exp( \rho H^1))^x $, the same
reasoning shows that $M^x_t$ is a martingale and
$$
Ef(Z^x_t)= f(x) + E \int Q f(Z^x_s) ds,
$$
which is equivalent to \eqref{kegw} for
$f\in\bF$. 
\end{proof}

The generator of the reflected \gw{} process  $\tZ_t$ reads
\be{def-reflected}
\begin{split}
\tQg f(x):=&\sum_{\ell=-1}^\infty x p(\ell+1) \ind_{\{x+\ell \ge 1\}}
\big(f(x+\ell)-f(x)\big),\qquad x\in \N,
\end{split}
\ee
if the right hand side is well defined. The reflected process can be thought of
as an absorbed process regenerated at position 1 each time it gets
extinct. Since the absorbed process can terminate only when it is at state 1 and
jumps to 0 at rate $p(0)$, the number of regenerations until time $t$ is
dominated by a Poisson random variable $\cN_t$ of mean $tp(0)$ and 
\[
E\big(\exp(\rho \tZ^1_t)\big)\; \le\;  E \exp\Bigl(\rho \sum_{n=1}^{\cN_t}H^1_n\Bigr),
\]
where $H^1_n$ are i.i.d random variables with the same distribution as $H^1$ and
$\cN_t$ is independent of $(H^1_n, n\ge 1)$.  Hence,
\[
E\big(\exp(\rho \tZ^1_t)\big) \;\le\; \exp\big( tp(0)\, C(\rho) \big).
\]
Let $\tSt$ be the semigroup of the reflected \gw{} process. Using the same reasoning as before, we obtain
\bc{tqg}
Any  $f\in\bF$  satisfies the Kolmogorov equations for $\tQg$:
\be{kergw} \frac{d}{dt}\tSt f = \tQg\tSt f =
\tSt\tQg f.  
\ee
\ec

\paragraph{Large deviations}\label{sec-ld}
We study $\tZ _t $, the reflected \gw{} process with
generator $\tQg$ given by \eqref{def-reflected}.
Since $p$ satisfies
\eqref{a15}, for $\rho<\sigma'\le \sigma$,
\be{ld-4}
\Gamma(\rho)\;:=\;
p(0)+\sum_{\ell=1}^\infty p(\ell+1) \ell^2 e^{\rho \ell}\;<\;\infty.
\ee
Recall that $v$ is defined in \eqref{def-drift} and define $\beta$ as
\be{a11}
\beta=\sup\{ \rho > 0 \colon  \rho \Gamma(\rho)\le v \},
\ee
which is well defined thanks to the exponential moment of $p$.

\bl{lem-expon} For any $\rho<
\min\{\beta,\sigma'\}$, and $x\in\N$,
\be{ineq-expon}
E\exp(\rho \tZ^x_t)\;\le\; e^{-\frac{\rho v}{2} t}
e^{\rho x} + te^\rho.
\ee
\el 
\bpr Since $\rho<\sigma'\le \sigma$, the reflected \gw{} generator
\eqref{def-reflected} applied to
$e^{\rho\cdot}$ is well defined and gives
%
\[
\begin{split}
\tQg(e^{\rho \cdot})(x)\;&=\;\sum_{\ell=-1}^\infty x p(\ell+1) e^{\rho x}
\big(e^{\rho \ell}-1\big)-p(0)\ind_{\{x=1\}}\big(1-e^\rho\big)\\
\;&=\; x e^{\rho x} \Big(-\rho v+\sum_{\ell=-1}^\infty p(\ell+1)\big(
e^{\rho \ell}-1-\rho \ell\big)\Big)+p(0)\ind_{\{x=1\}}\big(e^\rho-1\big).
\end{split}
\]
%
Using that for $a\ge 0$, $e^a-(1+a)\le \frac{a^2}{2}e^a$,
\be{expon-2}
\begin{split}\tQg(e^{\rho \cdot})(x)&\;\le\;  \rho x e^{\rho x}\Big(-v+
\frac{\rho}{2}\Gamma(\rho)\Big)+p(0)\ind_{\{x=1\}} e^{\rho}\\
&\;\le\;- \frac{v\rho}{2}e^{\rho x}\,+\,e^{\rho} ,
\end{split}
\ee
using $\rho< \beta$ and $\beta\Gamma(\beta)\le v$. Since $\rho<\sigma'$,
Corollary \ref{tqg} and Gronwall's inequality  give  \reff{ineq-expon}.\epr

We obtain now a Large Deviation estimate.
\bp{prop-LD} 
Let  $T\ge {1 \over 16 v}$ and 
$\delta \ge \max\{1, 4Tp(0)\}$. Then, there is a constant $\kappa$,
independent of $x$, such that
\be{ld-reg}
P\Big( \sup_{s<T}\big( \tZ^x_s-
e^{-v s}x \big)\ge \delta \Big)\;\le\;
\exp\Big(-\frac{\kappa}{T}
  \frac{\delta^2}{\max\{x,\delta\}}\Big).
\ee
\ep
\bpr

Set $z_t^x=e^{-v t}x$ and introduce the process
\be{ld-6}
\begin{split}
\epsilon_t^x\;:=&\;\tZ^x_t-x+v \int_0^t\tZ^x_sds\\
=&\;\big(\tZ _t^x-z_t^x\big)+v \int_0^t\big(\tZ ^x_s-z^x_s\big)ds.
\end{split}
\ee
To stop $\tZ_t^x$ when it crosses $2\max\{x,\delta\}$ define
\be{ld-7}
\tau:=\inf\acc{ t\ge 0:\ \tZ_t^x\ge 2\max\{x,\delta\}}.
\ee
Note that if $\tau<\infty$, 
then $\tZ_\tau^x-z_\tau^x\ge 2\max\{x,\delta\}-x\ge \delta$. Thus,
\be{stop-1}
\acc{\tZ _t^x-z_t^x\ge \delta}\subset 
\acc{\tZ_{t\wedge \tau}^x-z_{t\wedge \tau}^x\ge \delta}.
\ee
For functions $g_1,\,g_2:\R\to\R$ verifying
\[
g_1(t) =g_2(t) +v\int_0^t g_2(s) ds,\qquad v\ge0,
\]
it holds
\[
\sup_{t\le T} |g_1(t) |\le \frac{\delta}{2}\;\Longrightarrow\;
\sup_{t\le T} |g_2(t) |\le \delta.
\]
Hence,
\be{ld-8}
\acc{\sup_{t\le T} \big|\tZ_{t\wedge \tau}^x-z_{t\wedge \tau}^x\big|
\ge \delta}\;\subset\; \acc{\sup_{t\le T} |\epsilon_{t\wedge \tau}^x|\ge
\frac{\delta}{2}}.
\ee
Note that
\[
\acc{\sup_{t\le T} |\epsilon_{t\wedge \tau}^x|\ge
\frac{\delta}{2}}\;=\;
\acc{\sup_{t\le T} \epsilon_{t\wedge \tau}^x\ge \frac{\delta}{2}}
\,\cup\,\acc{\inf_{t\le T} \epsilon_{t\wedge \tau}^x\le -\frac{\delta}{2}}.
\]
The treatment of the two terms on the right hand side of the previous formula is
similar, and we only give the simple argument for the first of them. 
For $\rho<\sigma'$, the
following functional is a local martingale (see \cite[page 66]{EK}).
\be{ld-9}
\cM_t :=\exp\Big({\rho\tZ _t^x}-{\rho x}-\int_0^t \big(e^{-\rho \cdot}
\tQg(e^{\rho \cdot })\big)(\tZ ^x_s)ds\Big).
\ee
Using the bounds of Lemma \ref{gwke} we obtain that $\cM_t$ is in fact a martingale. Observe that
\be{ld-11}
\begin{split}
e^{-\rho x} \tQg(e^{\rho\cdot})(x)
&\;=\;  x \sum_{\ell=-1}^\infty p(\ell+1)\big(e^{\rho \ell}-1\big)+
p(0)\ind_{\{x=1\}} \big( e^{\rho}-1\big)\\
&\;\le\; -\rho v x+\rho\, p(0)+\frac{\rho^2 }{2} \big(
x\Delta(\rho)+p(0)e^{\rho}\big),
\end{split}
\ee
with,
\be{ld-12}
\Delta(\rho):=\frac{2}{\rho^2}
\sum_{\ell=-1}^\infty
p(\ell+1)\big(e^{\rho \ell}-1-\rho \ell\big)\ge 0.
\ee
We have already seen that $\Delta(\rho)\le \Gamma(\rho)$.
Then, we bound the martingale $\cM_t$ as follows.
\be{ld-10}
\begin{split}
\cM_t &\;\ge\; \exp\Big(\rho\big(\tZ _t^x-x\big)-
\big(- \rho v+\frac{\rho^2}{2}\Delta(\rho)\big) \int_0^t
\tZ ^x_sds-\rho p(0)t-\frac{\rho^2}{2}t e^{\rho}\Big)\\
&\;\ge\;\exp\Big(\rho\epsilon_t^x-\rho p(0)t
-\frac{\rho^2}{2}\Gamma(\rho)
\int_0^t \tZ ^x_sds-\frac{\rho^2}{2}t e^{\rho}\Big).
\end{split}
\ee
By stopping the process at $\tau$, and using that $\delta\ge 1$,
we obtain for $t\le T$
\be{ld-14}
\exp\big(\rho\epsilon_{t\wedge \tau}^x\big)\le \cM_{t\wedge \tau}
\exp\big(\rho p(0)T+
\rho^2 \max\{x,\delta\}T\Gamma(\rho)\big).
\ee
Using \reff{stop-1}, \reff{ld-8} and \reff{ld-14}, and the
bound $p(0)T\le \delta/4$, we obtain for any $\rho>0$
\be{ld-15}
\begin{split}
P\big(\sup_{s\le T} \big(\tZ ^x_s
-z^x_s\big)\ge \delta\big)\;&\le\;
P\Big( \sup_{s\le T}  \cM_{s\wedge \tau}\ge \exp\big(
\frac{\rho\delta}{4} -\rho^2  \max\{x,\delta\}T
\Gamma(\rho)\big)\Big)\\
&\le\;\exp\Bigl(-\frac{\rho\delta}{8}+
\rho^2  \max\{x,\delta\}T
\sup_{\rho<\beta} \Gamma(\rho)\Bigr),
\end{split}
\ee 
by Doob's martingale inequality and for $\rho<\beta$. 
Optimize over $0<\rho< \beta$ (recalling that 
$16T\beta \Gamma(\beta)>1$), and choose
$\rho^*$
\be{ld-17}
\rho^*:=\frac{1}{16T} 
\frac{\delta}{\max\{x,\delta\}}\frac{1}{\Gamma(\beta)}<\beta.
\ee
The result follows now from \reff{ld-15} and \reff{ld-17}.
\epr

\section{Bounds for the rightmost \fv-particle}
\label{rightmost-particle}
In this section, we bound small exponential
moments of the rightmost \fv-particle.
We first define a threshold $A$, such that with very small probability,
the rightmost particle's position does not decrease when it is initially
larger than $A$. Define 
\[
\gamma:=\frac{1}{2}\Bigl(1-\exp\Bigl(-\frac{v}{4p(0)}\Bigr)\Bigr) \in (0,1).
\]
Choose
\[
\rho_0 := \frac{\min\{\beta,\sigma',\gamma \kappa p(0)\}}{4}
\]
where $\kappa$ is the constant given by Proposition \ref{prop-LD}. Define
\be{def-amin1}
 A:=\frac{2\kappa p(0)}{\rho_0}>1.
\ee
Define the time and the error $\delta$ entering
in the large deviation estimate of Proposition~\ref{prop-LD} as follows.  For an
arbitrary initial condition $\xi$,
\be{def-amin2}
T:=\frac{1}{4p(0)},\quad\text{and}\quad \delta:=\max\Bigl\{1,
\frac{\rmost(\xi)}{A}\Bigr\},
\ee
recall here that $\rmost(\xi)=\max_{i\le N} \xi(i)$, and set
$V_L(\xi)=\exp(\rho\min( \rmost(\xi),L))$ for $L>A$ which will
be taken to infinity later. We use the notation
$[F(\xi_t)]_0^T:=F(\xi_T)-F(\xi_0)$.
\begin{proof}[Proof of Proposition~\ref{prop-rightmost}]
  We use the construction in Section \ref{sec-construction} to couple the \fv{}
  process $\xi^\xi_t$ and the branching process $\zeta^\zeta_t$ with
  $\zeta=\sum_i\delta_{(i,\xi(i))}$, so that $\zeta(i,\xi(i))=1$ for all
  $i$. Then, by \eqref{a510} $\rmost(\xi_t )\le \rmost(\zeta_t )$ and it is
  sufficient to prove an inequality like \eqref{foster} for
  $\rmost(\zeta_t)$. Notice that for the initial configurations $\xi$ and
  $\zeta$, $R(\xi)=R(\zeta)$. We drop the superscripts
  $\xi$ and $\zeta$ in the remainder of this proof.

 Define the event 
 \be{a80}  \cG=\cG(\xi,T):=\acc{\rmost(\zeta_T)-e^{-v T} \rmost(\xi)\le\
\delta},
\ee
and for a positive real $c$, we define the set
\[
K_c:=\{\xi \colon R(\xi)\le c\}.
\]
 On $K_A^c$, $\delta=\rmost/A<\rmost$, and on $K_A^c\cap \cG$,
\be{lyap-3}
\rmost(\zeta_T)\;\le\; \Bigl(\frac{1}{A}+e^{-v T}\Bigr) \rmost(\xi)\; \le\; (1-\gamma) \rmost(\xi).
\ee
Hence, 
\be{lyap-4} 
\ind_{K_A^c\cap \cG} \cro{ V_L(\zeta_t)}_0^T \;\le\;
V_L(\xi)\pare{e^{-\gamma\rho \rmost(\xi)}-1} \ind_{K_A^c\cap K_L\cap \cG}
\;\le\; -V_L(\xi)\pare{1-e^{-\gamma\rho A}}\ind_{K_A^c
\cap K_L\cap \cG} .  
\ee
Since $A>1$, on $K_A\cap \cG$, $\rmost(\zeta_T)\le Ae^{-vT}+1\le 2A$ so that
\[
\ind_{K_A\cap \cG} \cro{ e^{\rho \rmost(\zeta_t)}}_0^T\le 
e^{2\rho A}\ind_{K_A\cap \cG} .
\]
Thus
\be{lyap-6}
\begin{split}  
\cro{ V_L(\zeta_t)}_0^T&\;\le\; 
-\big(1-e^{-\gamma \rho A}\big) e^{\rho \rmost(\xi)} 
\ind_{K_A^c\cap K_L\cap \cG}\,+\,
e^{2\rho A} \ind_{K_A\cap \cG} \,+\,
\cro{ e^{\rho  \rmost(\zeta)}}_0^T\ind_{\cG^c}\\
&\;\le\; -
\big(1-e^{-\gamma \rho A}\big) V_L(\xi) \ind_{K_A^c\cap K_L}\,+\,
e^{2\rho A} \ind_{K_A}\,+\,2e^{\rho \rmost(\zeta_T)}
\ind_{\cG^c},
\end{split}
\ee
where we used that
\[
\ind_{K_A^c\cap K_L} - \ind_{K_A^c\cap K_L\cap \cG}\le
\ind_{\cG^c}.
\]
Choose $\rho:=\min(\rho_0,\frac{\kappa}{4TA^2})$ and observe that by
Lemma \ref{a501},
\[
 E\cro{e^{2\rho  \rmost(\zeta_T)}} \le E|\zeta_T|\,
E\cro{\exp\Bigl(2\rho \tZ_T^{\rmost(\xi)}\Bigr)} \le Ne^{p(0)T} \left (
e^{-{2\rho v} T}
e^{2\rho R(\xi)} + Te^{2\rho} \right).
\]
by Lemma \ref{growth.mt} for the bound of the first factor and Lemma
\ref{lem-expon} for the bound of the second factor. 
Also, Lemma \ref{prop-LD} implies
\[
 P(\cG^c) \le E|\zeta_T| P\Big( \sup_{s<T}\big( \tZ^{R(\xi)}_s-
e^{-v s}R(\xi) \big)> \delta \Big)\;\le\;
Ne^{p(0)T}(e^{-\frac{\kappa}{TA}}\one_{R(\xi)\le A} + e^{-\frac{\kappa
R(\xi)}{TA^2}}\one_{R(\xi)>A}).
\]
Taking expectation on \reff{lyap-6} we bound the
last term as follows. For constants $C_1,C_2,\tilde C_1,$ and $\tilde C_2$
\be{lyap-5}
\begin{split}
E\cro{e^{\rho \rmost(\zeta_T)}\ind_{\{\cG^c\}}}\;
&\le \; \pare{ P(\cG^c) 
E\cro{e^{2\rho  \rmost(\zeta_T)}}}^{1/2}\\
&\le \; Ne^{p(0)T} \pare{C_1\one_{K_A} + C_2
\exp{\left(-\frac{\kappa R(\xi)}{2TA^2}\right)}\one_{K_A^c} }^{1/2}\;\\
&\le\;  \tilde C_1 N \ind_{K_A}+ \tilde C_2 N  \exp\Bigl(-\frac{\kappa \rmost(\xi)}{4T A^2}\Bigr)
\ind_{K_A^c}.
\end{split}
\ee
Gathering \reff{lyap-5} and \reff{lyap-6} we obtain, for any $L>A$,
\be{foster.copy}
\begin{split}E V_L(\xi^\xi_T) -V_L(\xi)\; & <\;\;
-\,c_1\, V_L(\xi)\ind_{L>R(\xi)>A} + C_1 N \ind_{R(\xi)\le A} + C_2N 
e^{-\rho \tilde c_2R(\xi)}\\
& \le \;\; -\,c_1\, V_L(\xi)\ind_{L>R(\xi)>A} + C N e^{- c_2 R(\xi)}\end{split}  
\ee 
which completes the first part of the proof, inequality \eqref{foster},
as one takes $L$ to infinity in \reff{foster.copy}.

For the second part, take $C>0$ and observe that the set of $\xi$ such
that the right hand
  side of \eqref{foster.copy} is larger than $-C$ is finite. Foster's
criteria, \cite[Theorems 8.6 and 8.13]{Robert(2003)} 
implies that both the chain
  ($\xi_0,\xi_T,\xi_{2T},\cdots)$ and the process $\xi_t $ are ergodic with the
same invariant measure that we call $\lambdan$.

Now, consider again \reff{foster.copy} for a fixed $L$. Note
that $V_L$ is bounded, so that by integrating
\reff{foster.copy} with this invariant measure, and then
taking $L$ to infinity, we obtain \eqref{rightmost-expon}.
\end{proof}

\section{The empirical moments of Fleming-Viot}
\label{moments-fv}

In this section we prove Corollary~\ref{cor-bary}. Introduce the occupation numbers $\eta:\N\times\N^N\to\N$ defined as
\[
\eta(x, \xi) := \sum_{i=1}^N \ind_{\xi(i)=x},
\]
for which we often drop the coordinate $\xi$. Notice that $m(x,\xi) =
\eta(x,\xi)/N$.  

For any integer $k$, define the $k$-th moment of the $N$ particles'
positions as
\[
M_k(\xi) := \sum_{i=1}^N\xi^k(i)=\sum_{x=1}^\infty x^k \eta(x,\xi).
\]
As there are only $N$ particles, $M_k$ is well defined.
Instead of working with the barycenter $M_1/N$, we consider
$\psi:= M_2/M_1$. Note the inequalities
\be{psi-2}
1\le \frac{M_1(\xi)}{N}\le \psi(\xi)\le R(\xi).
\ee
The function $\psi$ is not compactly supported (nor bounded).
Even though $\cLn\psi$ is well defined, we need to use
later that $\int \cLn\psi d\lambda^N=0$. We do so by approximating
$\psi$ by a compactly supported function $\psil$ for which
we have
\be{no-proof}
\int  \cLn\psil d\lambda^N=0,\quad\text{and}\quad
\lim_{L\to\infty} \cLn\psil  =\cLn\psi \quad\text{pointwise}.
\ee
We approximate the unbounded test function $\psi$ by
the following one
\be{testf-bounded}
\psil(\xi)=\frac{M^L_{2}(\xi)}{M_1^L(\xi)},\quad
\text{with}\quad M_k^L(\xi)=\sum_{i=1}^N
\min(\xi^k(i),L^k)=\sum_{x=1}^L x^k \eta(x,\xi)+L^k\sum_{x>L} \eta(x,\xi).
\ee
As $N$ is fixed, $M_k^L=L^k N-\sum_{x=1}^L (L^k-x^k)\eta(x)$,
and has compact support.
It is easy, and we omit the proof, to see that there exist a positive constant
$C$ such that 
\be{no-proof2}
\big| \cLn\psi-\cLn\psil\big|\le 
\big| \cLn\psi\big|+ \big|\cLn\psil\big|\le 
C \psi\le C\,R,
\ee
where we recall that $R(\xi) = \max_i\xi(i)$. We have established in
Proposition~\ref{prop-rightmost} that 
$R(\xi)$ is integrable with respect to
$\lambda^N$, so that \reff{no-proof}
implies that 
\be{no-proof3}
\int  \cLn\psi\, d\lambdan=0.
\ee
The main result of this section is the following.
\bl{lem-psi}
There are positive constants $C_1,C_2$ such that for any integer $N$ large enough,
\be{ineq-psi}
\int \psi\, d\lambdan\;\le\;
C_1+\frac{C_2}{N}\int R^2
d\lambdan.
\ee
\el
\begin{proof}[{\bf Proof of Lemma~\ref{lem-psi}}]
We
  decompose the generator \eqref{generatorfv} into two generators, one governing the refeed part and
  the other the spatial evolution of the particles: $\cLn=\cLn_{\rm
    drift}+\cLn_{\rm refeed}$, which applied to functions depending on
  $\xi$ only through $\eta(\cdot,\xi)$, read
\be{L-jump}
\cLn_{\rm refeed}=p(0)\eta(1)\sum_{x=1}^\infty
\frac{\eta(x)}{N-1} \pare{A_1^-A_{x}^+-\ind},
\quad\text{with}\quad
A_x^\pm(\eta)(y) = \begin{cases}
                     \eta(y) & y\ne x,\\
                     \eta(x) \pm 1 & y=x,
                 \end{cases}
\ee
\be{L-drift}
\cLn_{\rm drift}=\sum_{x=2}^\infty x \eta(x)p(0)(A_x^-A_{x-1}^+-\ind)+
\sum_{x=1}^\infty x \eta(x,\xi) \sum_{i=1}^\infty p(i+1)(A_x^-A_{x+i}^+-\ind).
\ee
It is convenient to introduce a boundary term
\be{boundary-bulk}
B=-\eta(1)p(0)(A_1^-A_{0}^+-\ind)
\quad\text{and call}\quad \cLn_0=\cLn_{\rm drift}-B,
\ee
which applied on $\psi$ yield
\be{Lboundary}
B\psi\;= \;-p(0)\eta(1)\pare{\frac{M_2-M_1}{M_1(M_1-1)}};
\ee
\be{bulk-1}
\begin{split}
\cLn_0  \psi &\; =\;
 \sum_{x=1}^\infty x\eta(x)
\sum_{i=-1}^\infty p(i+1)\pare{\frac{M_2+2ix+i^2}{M_1+i}-
\frac{M_2}{M_1}}\\
&\;=\; \sum_{x=1}^\infty x\eta(x)\acc{
\sum_{i=-1}^\infty ip(i+1)
\pare{\frac{2x M_1-M_2+iM_1}{M_1(M_1+i)}}}\\
&\;=\; -p(0)\frac{M_2-M_1}{M_1-1}+\Big(\sum_{i=1}^\infty
p(i+1)i  \frac{M_1}{M_1+i}\Big)\times
\frac{M_2}{M_1}+\sum_{i=1}^\infty p(i+1)i^2 \frac{M_1}{M_1+i}\\
&\;\le\; -v\psi+p(0)\frac{M_1}{M_1-1}+\sum_{i=1}^\infty p(i+1) i^2
\;\le\; -v\psi+C_0,
\end{split}
\ee
for some positive constant $C_0$. Finally, for the jump term
\be{jump2-1}
\begin{split}
\cLn_{\rm refeed}\psi&\;=\;  p(0)\eta(1)\sum_{x=1}^{\infty}
\frac{\eta(x)}{N-1} \pare{\frac{M_2+x^2-1}{M_1+x-1}-\frac{M_2}{M_1}}\\
&\;=\;  p(0)\eta(1)\sum_{x=1}^{\infty}
\frac{\eta(x)}{N-1} \frac{M_1(x^2-1)-M_2(x-1)}{M_1(M_1-1)}\times
\frac{1}{1+\frac{x}{M_1-1}}.
\end{split}
\ee
If we set $\Delta(x)=1/(1+x)-(1-x)$, for $x\in [0,1]$, then
\be{basic-ineq}
\Delta(x)=\frac{x^2}{1+x},\quad\text{and}\quad
0\le \Delta(x)\le x^2.
\ee
We apply \reff{basic-ineq} to expand the last term in \reff{jump2-1},
with $x/(M_1-1)\le 1$ for $x\le R(\xi)$, and obtain
\be{jump2-2}
\cLn_{\rm refeed}\psi= p(0)\eta(1)\sum_{x=1}^{\infty}
\frac{\eta(x)}{N-1} \frac{M_1(x^2-1)-M_2(x-1)}{M_1(M_1-1)}\times
\pare{1-\frac{x}{M_1-1}+\Delta(\frac{x}{M_1-1})}.
\ee
Note that
\[
\sum_{x=1}^{\infty}\eta(x)\big(M_1x^2-M_2x\big)=0,\quad\text{and}
\quad \sum_{x=1}^{\infty}\eta(x)\big(M_1x^2-M_2x\big)(-x)=
-M_3M_1+(M_2)^2.
\]
Also,
\[
\begin{split}
\sum_{x=1}^{\infty}\frac{\eta(x)}{N-1}\big(M_2-M_1\big)
\pare{1-\frac{x}{M_1-1}}=&\pare{N-\frac{M_1}{M_1-1}}
\frac{\big(M_2-M_1\big)}{N-1}\\
=&\pare{1-\frac{1}{(N-1)(M_1-1)}}\big(M_2-M_1\big)\\
=&\big(M_2-M_1\big)-\frac{M_2-M_1}{(N-1)(M_1-1)}.
\end{split}
\]
Thus
\[
\cLn_{\rm refeed}(\psi)
=-p(0)\frac{\eta(1)}{N-1}
\frac{M_3M_1-(M_2)^2}{M_1(M_1-1)^2}+
p(0)\eta(1)\frac{M_2-M_1}{M_1(M_1-1)}+{\rm Rest},
\]
where
\be{equa-R}
{\rm Rest}\;=\;-\frac{p(0)\eta(1)(M_2-M_1)}{(N-1)(M_1-1)}+
p(0)\eta(1)\sum_{x=1}^{\infty}
\frac{\eta(x)}{N-1}\frac{M_1(x^2-1)-M_2(x-1)}{M_1(M_1-1)}\times
\Delta(\frac{x}{M_1-1}).
\ee
Using that $M_2-M_1\ge 0$, 
\be{jump2-3}
{\rm Rest}\;\le\;
p(0)\frac{\eta(1)}{N-1}\sum_{x= 1}^{\infty}
\eta(x)\times\frac{x^2}{(M_1-1)^2}
\Bigl(\frac{M_1x^2+M_2x}{(M_1-1)^2}\ +\frac{M_2-M_1}
{(M_1-1)^2}\Bigr)
\ee
\be{jump2-4}
\;\le\;
3p(0)\Bigl(\frac{M_1}{M_1-1}\Bigr)^{\!\!4}
\frac{\eta(1)}{N-1}\frac{M_2}{(M_1)^2}
\frac{R^2}{M_1}
\;\le\; 24 p(0) \frac{R^2}{N}.
\ee
Thus, we reach that for $C_0$ independent of $N$ and $L$,
\be{psi2-1}
\cLn(\psi)\;\le\; -v\,\psi\,+\,24\, p(0)\, \frac{R^2}{N}+C_0.
\ee
We now integrate \reff{psi2-1} with respect to the invariant measure,
and use that $\int \cLn \psi d\lambdan=0$ to obtain
for constants $C_1$, and $C_2$ (independent of $N$)
\be{main-psi2}
\int \psi d\lambdan
\le C_1+C_2\frac{\int R^2\,d\lambdan}{N}.
\ee
\end{proof}

\section{Uniform convergence to the Yaglom limit}\label{sec-yaglom}

Define the
generating function of a distribution $\mu$ on $\N$ by
\be{def-gen} G(\mu;z):=\sum_{x\in\N} \mu(x)z^x,\qquad z\in\R,\; |z|<1.  \ee
In this section we show a uniform convergence for $\mu\in K(\alpha)$ of the
generating functions of $\mu T_t$ to the generating function of the \qsd{}
$\nu$. We invoke a key result of Yaglom~\cite{yaglom}. The continuous time
version can be found in Zolotarev \cite{zolotarev}.
\bl{lem-yaglom}[Yaglom 1947, Zolotarev 1957]. 
There is a probability measure $\nu$ such that
\be{lim-yaglom}
\lim_{t\to\infty} G(\delta_1T_t;z)=G(\nu;z),
\ee
and the generating function of $\nu$ is given by
\be{gen-minimal}
G(\nu;z)=1- \exp\Big(-v \int_0^z 
\frac{du}{\sum_{\ell\ge 0}p(\ell) u^\ell-z}\Big),\qquad z\in[0,1).
\ee
\el
The measure $\nu$ is in fact $\nuqs$, the minimal \qsd{}. We do not use the explicit
expression \eqref{gen-minimal} of the generating function of $\nu$; we only use
\eqref{lim-yaglom}. Recall that $\mu T_t$ is the law of $Z_t$ with initial
distribution $\mu$ conditioned on survival until $t$ and that  $K(\alpha)$ is
defined in \reff{def-K}. The next result says that the Yaglom limit
holds uniformly for all initial measures in $K(\alpha)$. 

\bp{prop-yaglom} For any $\alpha >0$ 
\be{yaglom-main} \lim_{N\to\infty}\sup_{ \mu\in K(\alpha)} \big| G(\mu T_t;z)-
G(\nuqs;z)\big|\;=\;0.
\ee
As a consequence, for each $x \in \N$,
\be{unif.Yaglom1}
\lim_{t \to \infty} \sup_{\mu\in K(\alpha)} | \mu T_t(x) - \nuqs(x) |=0.
\ee
\ep

\begin{proof}[Proof of Proposition \ref{prop-yaglom}]
 Recall that $S_t$ is the semigroup of the Galton-Watson process and observe that  for any $\ell\in\N$,
  $G(\delta_\ell S_t;z)=G^\ell(\delta_1S_t;z)$. We set, for simplicity, 
\[
g(z)\;:=\;1-G(\delta_1S_t;z)\;\in\;[0,1],
\]
for $z\in[0,1]$. The following inequalities are useful. For $z\in[0,1]$,
\be{ineq-G}
1-\ell g(z)\;\le\;(1-g(z))^\ell\;\le\; 1-\ell g(z)+\ell^2g^2(z).
\ee
The generating function of $\mu T_t$ reads (the sums run on $\ell\in\N$)
\be{gen-T}
\begin{split}
G(\mu T_t;z)&\;= \;
\frac{G(\mu S_t;z) - G(\mu S_t;0)}{1 - G(\mu S_t;0)}\\
&\;= \; \frac{\sum_{\ell} \mu(\ell ) \big( G(\delta_\ell S_t ;z)
-G(\delta_\ell S_t;0) \big)  }{\sum_{\ell} \mu(\ell )
\big(1-G(\delta_\ell S_t;0)\big)}\\
&\;= \;\frac{\sum_{\ell} \mu(\ell ) \Big( (1-g(z))^\ell -(1-g(0))^\ell \Big)}
{\sum_{\ell} \mu(\ell )\big(1-(1-g(0))^\ell \big)}.
\end{split}
\ee
Also,
\[
1-G(\delta_1T_t;z)\;=\;\frac{1-G(\delta_1S_t;z)}{1-G(\delta_1S_t;0)}
\;=\;\frac{g(z)}{g(0)}.
\] 
We now produce upper and lower bounds for $G(\mu T_t;z)-G(\nuqs;z)$.
We start with the upper bound. Using first \eqref{gen-T} and then \eqref
{ineq-G},
\be{main-matt}
\begin{split}
  G(\mu T_t;z)-G(\nuqs;z)
  &\;= \; \frac{\sum_{\ell} \mu(\ell ) \Big((1-g(z))^\ell -1+
    \big(1-(1-g(0))^\ell \big)(1-G(\nuqs;z))\Big)}
  {\sum_{\ell} \mu(\ell ) \big(1-(1-g(0))^\ell \big)}\\
  &\;\le \; \frac{ \sum_{\ell } \ell \mu(\ell )\Big(-g(z)+\ell
    g^2(z)+g(0)(1-G(\nuqs;z))
    \Big)}{ \sum_{\ell } \ell  \mu(\ell )\Big( g(0)-\ell g^2(0)\Big)}\\
  &\;\le \; \frac{ \sum_{\ell } \ell \mu(\ell
    )\Big((1-G(\nuqs;z))-\frac{g(z)}{g(0)}\Big) + \sum_{\ell }\ell ^2\mu(\ell
    )\frac{g(z)}{g(0)}g(z)}
  {\sum_{\ell } \ell  \mu(\ell )\big(1-\ell g(0)\big)}\\
  &\;\le \; \frac{ G(\delta_1T_t;z)-G(\nuqs;z)+\frac{M_2(\mu)}{M_1(\mu)}
    (1-G(\delta_1T_t;z))g(z)}{1-\frac{M_2(\mu)}{M_1(\mu)} g(0)},
\end{split}
\ee
where $M_k(\mu):=\sum_\ell \ell^k\mu(\ell)$, $k\in\N$. Thus,
\be{conc-upper}
\sup_{\mu\in K(\alpha)} 
G(\mu T_t;z)-G(\nuqs;z)\;\le\;
\frac{|G(\delta_1T_t;z)-G(\nuqs;z)|+ (1-G(\delta_1T_t;z))
g(z)\alpha}{1-\alpha g(0)}.
\ee
Now, for the lower bound, we use similar arguments to 
reach
\be{miss-lower}
\begin{split}
G(\mu T_t;z)-G(\nuqs;z)&\; \ge\;
\frac{ \sum_{\ell } \ell  \mu(\ell )\Big(-g(z)+g(0)(1-G(\nuqs;z))-\ell g^2(0)
(1-G(\nuqs;z))
\Big)}{ \sum_{\ell } \ell  \mu(\ell ) g(0)}\\
&\; \ge\; G(\delta_1T_t;z)-G(\nuqs;z)-\frac{M_2(\mu)}{M_1(\mu)}
g(0)(1-G(\nuqs;z)).
\end{split}
\ee
Thus,
\be{conc-lower}
\inf_{\mu\in K(\alpha)} 
G(\mu T_t;z)-G(\nuqs;z)\;\ge\;
-|G(\delta_1T_t;z)-G(\nuqs;z)|-\alpha g(0)(1-G(\nuqs;z)).
\ee
Since $g(z)$ goes to 0 as the implicit $t$ goes to infinity, both
\reff{conc-upper} and \reff{conc-lower} go to 0. This proves
\eqref{yaglom-main}. The proof of \eqref{unif.Yaglom} follows from
\eqref{yaglom-main} and Lemma \ref{unif.conv} below on convergence of probability measures.  \epr

\bl{unif.conv}
Let $\{\mu_n^\gamma, n \in \N, \gamma \in \Gamma \}$ be a family of probability
measures. Assume that for each $z\in[0,1]$ we have
\be{gener.unif.conv}
\lim_{n\to\infty} 
\sup_{\gamma \in \Gamma} |G(\mu_n^\gamma,z) - G(\nu, z)|=0.
\ee
Then, for each $x \in \N$ we have
\[
\lim_{n\to \infty} \sup_{\gamma \in \Gamma} |\mu_n^\gamma(x) - \nu(x)| = 0.
\]
\el
\begin{proof}
  Let $f=\ind_{\{x\}}$. We consider the one-point compactification of $\N$,
  which we denote $\bar \N=\N\cup\{\infty\}$ and extend $f\colon \bar \N \to \R$
  by $f(\infty)=0$. Since $f$ is continuous function on $\bar \N$, the
  Stone-Weierstrass approximation theorem yields a function $h$, which is a
  linear combination of functions of the form $\{y\mapsto a^y,\ 0 \le a \le 1\}$
  (finite linear combinations of these functions form an algebra that separates
  points and contains the constants), such that for any $\ve>0$, $\sup_{y
    \in \N} |f(y) - h(y)| < \ve.$ Then
\[
\sup_{\gamma} |\mu_n^\gamma(x) - \nu(x)| \;=\;  \sup_{\gamma} |\mu_n^\gamma f -
\nu f| \;\le\; \sup_{\gamma} |\mu_n^\gamma f - \mu_n^\gamma h| \,+\,  \sup_{\gamma}
|\mu_n^\gamma h - \nu h| \,+ \,|\nu h - \nu f|.
\]
The first and the third term on the r.h.s. are smaller than $\ve$ while the
second one goes to zero as $n$ goes to infinity by assumption.
\end{proof}

\section{Closeness of the two semi-groups}\label{sec-semigroups}

In this section we show 
how propagation of chaos implies the closeness 
of $Em(x,\xi^\xi_t)$ and $m(\cdot,\xi)T_t$ uniformly 
in $\xi \in \Lambda^N$.
The arguments are similar to those used in \cite{ferrari-maric,AFG}. The key is
a control of the correlations that we state below. For a signed measure $\mu$ in
$\N$ we will need to work with the $\ell_2$ norm given by $\|\mu\|^2 = \sum_{x\in\N} (\mu(x))^2$.

\bp{prop-semigroups} There exist constants $c$ and $C$ such that, 
\be{l2-convergence}
 \sup_{\xi\in\N^N} 
\|E[m(x,\xi^\xi_t)]-m(\cdot,\xi)T_t\|\; \le \frac{Ce^{ct}}{N}.
\ee
As a consequence,
\be{ptwise-convergence}
 \sup_{\xi\in\N^N} 
|E[m(x,\xi^\xi_t)]-m(\cdot,\xi)T_t(x)|\; \le \frac{Ce^{ct}}{N}.
\qquad x\in\N.
\ee
Furthermore
\be{var0}
 \sup_{\xi\in\N^N} 
E\big[m(x,\xi^\xi_t) - m(\cdot,\xi)T_t\big]^2 \le \frac{Ce^{ct}}{N},\qquad x\in\N.
\ee
\end{proposition}

\begin{proposition}[Proposition 2 of \cite{AFG}]\label{prop-chaos}
For each $t>0$, and any $x,y\in\N$ 
\be{correlations}
\sup_{\xi\in\N^N}\big|E[m(x,\xi^\xi_t)m(y,\xi^\xi_t)]
-E[m(y,\xi^\xi_t)]\,E[m(x,\xi^\xi_t)]\big|\;\le\; \frac{2p(0)e^{2p(0)t}}{N}.
\ee
\end{proposition}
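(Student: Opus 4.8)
\emph{Plan.} The content of \eqref{correlations} is that the only mechanism creating dependence between two distinct particles is the refeeding, whose per-particle rate is $q(\xi(i),0)\le\bar q:=\sup_x q(x,0)$; for the Galton-Watson rates \eqref{rates} one has $q(x,0)=p(0)\ind_{x=1}$, so $\bar q=p(0)$. I would prove \eqref{correlations} by a Duhamel-Gronwall argument on the two-particle correlation, the whole point being to separate this bounded refeeding rate from the (unbounded) spatial rates. First, by exchangeability of the labels, writing $p_t(x):=P(\xi^\xi_t(1)=x)$ and $\chi_t(x,y):=P(\xi^\xi_t(1)=x,\xi^\xi_t(2)=y)-p_t(x)p_t(y)$, a count of diagonal versus off-diagonal pairs gives
\[
E[m(x,\xi^\xi_t)m(y,\xi^\xi_t)]-E[m(x,\xi^\xi_t)]E[m(y,\xi^\xi_t)]
=\frac1N\big(\ind_{x=y}p_t(x)-p_t(x)p_t(y)\big)+\frac{N-1}{N}\,\chi_t(x,y).
\]
The first term is at most $1/N$ in absolute value, uniformly in $x,y$, so it remains to bound $\chi_t$. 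Since the initial configuration $\xi$ is deterministic, $\chi_0\equiv0$, and all constants below will be uniform in $\xi$.

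The next step is to write the evolution of $\chi_t$ and to isolate transport from sourcing. Applying $\cLn$ to the two-particle indicator $\ind_{\xi(1)=x}\ind_{\xi(2)=y}$, only jumps of particles $1$ and $2$ matter, and their spatial parts act as two independent one-particle generators; the refeeding parts are the only ones that bring in the empirical measure of the other labels. Let $U_t:=e^{tA}\otimes e^{tA}$ be the free two-particle propagator, where $A$ is the forward generator of a single particle performing spatial jumps only (no transition to $0$); $A$ is conservative, so $U_t$ is a Markov semigroup and hence an $\ell^1$-contraction for $\|\mu\|_1:=\sum_{x,y}|\mu(x,y)|$. This is precisely what harmlessly absorbs the unbounded spatial rates. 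Letting $\Theta_s$ denote the refeeding contribution to $\tfrac{d}{dt}\chi_s$ (the part not generated by $U$), the variation-of-constants formula together with $\chi_0\equiv0$ yields
\[
\chi_t=\int_0^t U_{t-s}\,\Theta_s\,ds,\qquad\text{so}\qquad \|\chi_t\|_1\le\int_0^t\|\Theta_s\|_1\,ds .
\]

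It then remains to bound $\|\Theta_s\|_1$. The refeeding source is a sum over the two tagged labels of terms $q(\xi(i),0)\,\tfrac{N}{N-1}m(\cdot,\xi)$, i.e. three-label expectations. I would split $m(\cdot,\xi)=\frac1N\sum_k\ind_{\xi(k)=\cdot}$ according to whether the refeeding target $k$ is one of the two tagged labels. The coincidence part $k\in\{1,2\}$ carries a factor $1/N$ and is bounded by $c\,\bar q/N$ using $0\le m\le1$ and $q(\cdot,0)\le\bar q$; the genuine three-label part $k\notin\{1,2\}$ is centered by subtracting its product counterpart, and by exchangeability reduces to covariances of the form $\chi_s$ with coefficient at most $2\bar q$ (one factor $\bar q$ for each tagged label that may refeed). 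Crucially these bounds use only $q(\cdot,0)\le\bar q$ and $m\le1$, hence are uniform in $x,y$. This gives $\|\Theta_s\|_1\le 2\bar q\,\|\chi_s\|_1+c\,\bar q/N$, so that $\|\chi_t\|_1\le c\,\bar q\,t/N+2\bar q\int_0^t\|\chi_s\|_1\,ds$, and Gronwall's lemma produces a bound of the form $\|\chi_t\|_1\le \tfrac{c'}{N}e^{2\bar q t}$; combined with the explicit diagonal term and $\sup_{x,y}|\chi_t(x,y)|\le\|\chi_t\|_1$, this is \eqref{correlations} with $\bar q=p(0)$ once the constants are tracked as in \cite{AFG}.

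The main obstacle is exactly this last step: closing the BBGKY-type hierarchy, that is, showing that the genuine three-label terms reduce, up to the $O(1/N)$ coincidence corrections, to the pair correlation $\chi_s$ with a coefficient bounded uniformly in $x,y$ and in $N$. This is where exchangeability and the boundedness of the refeeding rate are indispensable, and it is the computation carried out in \cite{AFG}; the unbounded spatial rates never enter it, because spatial motion, being independent across particles, only transports correlations (through the contraction $U_t$) and never creates them between distinct labels.
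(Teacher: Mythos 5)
First, note what you are being compared against: the paper does not prove this proposition at all. It is quoted as Proposition~2 of \cite{AFG}, with the single remark that \cite{AFG} treats bounded rates ``but the extension to our case is straightforward.'' Your transport step is precisely the substance of that extension, and it is sound: since $q(x,0)=p(0)\ind_{\{x=1\}}$, the refeeding rate is bounded by $\bar q=p(0)$, while the spatial rates $\tilde q$, though unbounded, are conservative and act independently on the two tagged labels, so the product semigroup is Markov, hence an $\ell^1$-contraction on signed kernels; unbounded spatial motion transports correlations but never creates them, which is why the constant in \eqref{correlations} involves only $p(0)$. Two smaller points: the initial configuration $\xi$ is deterministic, so the labels are \emph{not} exchangeable at time zero; this is fixable by applying a uniform random permutation to the labels at time $0$ (harmless, since $m(\cdot,\xi)$ is label-invariant), but it should be said.

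The genuine gap is at the step you yourself flag as ``the main obstacle,'' and it is worse than incomplete: as stated it is incorrect. After removing the coincidence terms $k\in\{1,2\}$ (which are indeed $O(\bar q/N)$ in $\ell^1$), the refeeding source for the tagged pair $(1,2)$ contains terms of the form
\[
\frac{N-2}{N-1}\,\Bigl(E\bigl[q(\xi_s(1),0)\ind_{\{\xi_s(3)=x\}}\ind_{\{\xi_s(2)=y\}}\bigr]
-E\bigl[q(\xi_s(1),0)\ind_{\{\xi_s(3)=x\}}\bigr]\,P(\xi_s(2)=y)\Bigr),
\]
a covariance between a function of the \emph{pair} $(\xi_s(1),\xi_s(3))$ and a function of $\xi_s(2)$. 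Exchangeability only identifies the laws of the triples $(\xi_s(1),\xi_s(2),\xi_s(k))$ for different $k\ge 3$ with one another; it does not collapse this three-label covariance to the two-label object $\chi_s$. Consequently your key differential inequality $\|\Theta_s\|_1\le 2\bar q\,\|\chi_s\|_1+c\,\bar q/N$ does not follow, and the Gronwall loop does not close at the pair level: this is a BBGKY hierarchy in which the level-two marginal feeds on the level-three marginal, and closing it requires an additional idea --- for instance, bounding the third central moments by Cauchy--Schwarz against diagonal variances $\mathrm{Var}(m(x,\xi_t))$ (which are themselves controlled by the pair correlations up to $O(1/N)$), or the graphical-construction argument in which two tagged particles are correlated only on the event that their refeeding ``interaction chains'' have met by time $t$, an event of probability $O(e^{2p(0)t}/N)$. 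Deferring this computation to \cite{AFG} is, admittedly, exactly what the paper does; but since the closure is the entire content of the proposition, and since the specific reduction you assert in its place is false, the proposal does not yet constitute a proof.
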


The paper  \cite{AFG} proves this proposition for processes
with bounded rates, but the extension to our case is straightforward.

\begin{proof}[Proof of Proposition \ref{prop-semigroups}]
Fix $\xi\in\N^N$ and introduce the simplifying notations
\be{symbol-1} 
u(t,x):=Em(x,\xi^\xi_t)\quad
\text{and}\quad v(t,x):=m(\cdot,\xi)T_t(x).
\ee
Define $\delta(t,x)=u(t,x)-v(t,x)$. We want to show that for any $t>0$,
\be{norm-decay}
\frac{\partial}{\partial t} \|\delta(t) \|^2\;\le\;
\frac52 \|\delta(t) \|^2+\frac{4 p(0)e^{2p(0) t}}{N}.
\ee
Recall the definition  \eqref{rates} of the rates $q$  and the evolution
equations satisfied by $v(t,x)$ and $u(t,x)$:
\be{dynamic-v}
\frac{\partial}{\partial t} v(t,x)=\sum_{z\not= x,z>0}q(z,x)
v(t,z)-\Bigl(\sum_{z\not= x}q(x,z)\Bigr)v(t,x)+p(0) v(t,1)v(t,x),
\ee
\be{a82} 
\frac{\partial}{\partial t} u(t,x)=\sum_{z\not= x,z>0}q(z,x)
u(t,z)-\Bigl(\sum_{z\not= x}q(x,z)\Bigr)u(t,x)+p(0) u(t,1)u(t,x)+
W(\xi;t,x).
\ee
Here,
\be{def-R}
W(\xi;t,x)=p(0) \Bigl(\frac{N}{N-1}E[m(x,\xi^\xi_t)m(1,\xi^\xi_t)]
-E[m(1,\xi^\xi_t)]\,E[m(x,\xi^\xi_t)]\Bigr).
\ee
Proposition~\ref{prop-chaos} implies that
\be{step-31}
\sup_{\xi}|W(\xi;t,x)|\le \frac{ 2p(0)e^{2p(0)t}}{N}.
\ee
Observe two simple
facts. First, set $D=\{(x,z):\ x\ge 1,z\ge 1,\ x\not= z\}$,
and for any function $f:\N\to \R$
\be{obs-1}
\sum_{(x,z)\in D} \big(q(x,z)+q(z,x)\big) f^2(x)-
2\sum_{(x,z)\in D}q(x,z) f(x)f(z)=\sum_{(x,z)\in D}q(z,x)
(f(x)-f(z))^2.
\ee
The second observation is specific to our rates. For $x>0$
\be{obs-2}
\sum_{z\not= x}q(z,x) \le \sum_{z\not= x}q(x,z)+p(0).
\ee
Observation \reff{obs-1} is obvious and we omit its proof.
Observation \reff{obs-2} is done in details.
\be{step-2}
\begin{split}
\sum_{z\not= x}q(z,x)&\;=\; \sum_{z\ge 0,z\not= x} z p(x-z+1)
=
x\sum_{z\ge 0,z\not= x}p(x-z+1)+
\sum_{z\ge 0,z\not= x}(z-x)p(x-z+1)\\
&\;=\;x\big(p(0)+p(1)+\dots+p(x+1)\big)+\big(p(0)-p(2)-\dots-xp(x+1)\big)\\
&\;\le\;  x\sum_{i\ge 0} p(i)+p(0)=\sum_{z\not= x}q(x,z)+p(0).
\end{split}
\ee
Now, we have
\be{step-4}
\begin{split}
\sum_{x>0} \delta(t,x)\frac{\partial}{\partial t} \delta(t,x)
&\;=\;
\sum_{(x,z)\in D}\big( q(z,x) \delta(t,x)\delta(t,z)-
q(x,z) \delta^2(t,x)\big) \\
&\;+\;p(0)\sum_{x>0}  \big( u(t,x)u(t,1)-v(t,x)v(t,1)\big)\delta(t,x)
+\sum_{x>0} \delta(t,x)W(\xi;t,x).
\end{split}
\ee
Let us deal with each term of the right hand side of \reff{step-4}. For the first term we use \reff{obs-1} and \reff{obs-2}.
\be{term-1}
\begin{split}
&\sum_{(x,z)\in D}\big( q(z,x) \delta(t,x)\delta(t,z)-   
q(x,z) \delta^2(t,x)\big)\\
&\qquad\qquad\le\;
\sum_{(x,z)\in D} q(z,x) \delta(t,x)\delta(t,z)-\frac{1}{2}\sum_{x>0}\big( \sum_{z\not= x} q(x,z)+
\sum_{z\not= x} q(z,x) -p(0)\big) \delta^2(t,x)\\
&\qquad\qquad\le\;-\frac{1}{2}\sum_{(x,z)\in D}q(z,x)
(\delta(t,x)-\delta(t,z))^2+\frac{p(0)}{2}\|\delta(t)\|^2\\
&\qquad\qquad\le\;\frac{p(0)}{2}\|\delta(t)\|^2.
\end{split}
\ee
To deal with the second term, first note that
\[
\sup_{x>0} \big| \delta(t,x)\big|\;\le\;
\sqrt{ \sum_{x>0} \delta^2(t,x)}\;=\; \|\delta(t)\|.
\]
Then,
\be{term-3}
\begin{split}
\sum_{x>0}&\big(u(t,x)u(t,1)-v(t,x)v(t,1)\big)\delta(t,x)
\le \sum_{x>0} \big(\delta(t,x) u(t,1)+v(t,x) \delta(t,1)\big) \delta(t,x)\\
\le & \sum_{x>0} \delta^2(t,x)+
|\delta(t,1)| \sup_{x>0} \big| \delta(t,x)\big|\sum_{x>0} v(t,x)
\le 2\|\delta(t)\|^2.
\end{split}
\ee
For the last term, we have
\be{term-4}
|\sum_{x>0} \delta(t,x)W(\xi;t,x)|\le
\sup_{x>0}|W(\xi;t,x)|\times \sum_{x>0} |\delta(t,x)| 
\le 2\sup_{x>0} |W(\xi;t,x)|.
\ee
Thus, we obtain \reff{norm-decay}. Gronwall's inequality allows to conclude.
\end{proof}

\paragraph{\bf Acknowledgements}
We would like to thank Elie Aidekon for valuable discussions.
A.A.'s mission at Buenos Aires was supported by MathAmSud, 
and he acknowledges partial support of ANR-2010-BLAN-0108.

\obeylines
\parskip 0pt
Amine Asselah                    
LAMA, Bat. P3/4,
Universit\'e Paris-Est Cr\'eteil,
61 Av.\/ General de Gaulle,
94010 Cr\'eteil Cedex, France
{\tt amine.asselah@univ-paris12.fr}

\vskip 2mm

Pablo A. Ferrari and Pablo Groisman
Departamento de Matem\'atica
Facultad de Ciencias Exactas y Naturales
Universidad de Buenos Aires
Pabell\'on 1, Ciudad Universitaria
1428 Buenos Aires 
Argentina
{\tt pferrari@dm.uba.ar, pgroisma@dm.uba.ar}
\vskip 2mm

Matthieu Jonckheere
Instituto de Investigaciones Matem\'aticas Luis Santal\'o
 Pabell\'on 1, Ciudad Universitaria
1428 Buenos Aires 
Argentina
{\tt mjonckhe@dm.uba.ar}

\end{document}